\theoremstyle{plain}
\newtheorem{mainthm}{Theorem}
\theoremstyle{definition}
\numberwithin{equation}{section}
\let\oldmarginpar\marginpar
\renewcommand\marginpar[1]{\-\oldmarginpar[\raggedleft\footnotesize \textcolor{red}{#1}]{\raggedright\footnotesize\textcolor{red}{#1}}}
\DeclareMathOperator{\IFS}{IFS}
\begin{document}
\title[Equicontinuity of the Hutchinson operator $F$ and sensitivity of  $F_-$]
{Equicontinuity of the Hutchinson operator $F$\\ and sensitivity of  $F_-$ }
\author[A. Sarizadeh]{Aliasghar Sarizadeh}
\address{{Department of Mathematics,  Ilam University}}
\address{{Ilam, Iran.}}
\email{ali.sarizadeh@gmail.com}
\email{a.sarizadeh@ilam.ac.ir}

\begin{abstract}
For an  iterated function system  $ \mathcal{F} = \{ f_1, \dots, f_k \} $
of homeomorphisms on a compact metric space $(X, d)$,
 write $ \mathcal{F}_-= \{ f_1^{-1}, \dots, f_k^{-1} \} $.
The objective of this paper is to illustrate an iterated function system $\mathcal{F}$ of homeomorphisms on 
the circle that the Hutchinson operator of 
$\mathcal{F}$ is equicontinuous, 
but the Hutchinson operator of $\mathcal{F}_-$ is sensitive.
\end{abstract}

\subjclass[2010]{ 37B05, 37B65}
\keywords{iterated function system, sensitivity,  equicontinuity,   
Hutchinson operator, attractor}
\maketitle
\setcounter{tocdepth}{1}

\section{Introduction and preliminaries}
In various definitions of chaos, the sensitivity  is one of the important and undeniable pillars.
In 1971, \cite{Rue}, Ruelle and Takens  introduced the first precise definition for sensitivity.
To predigest, sensitivity characterizes the unpredictability of chaotic phenomena and so working 
 on sensitivity has attracted a lot of attentions from many researchers (e.g. \cite{ABC,AG,BBCDS,GW,GU,HZ}). 
 Recently, the study of  sensitivity and closely related concepts as equicontinuity has been popular  for iterated function systems and semigroup actions
 \cite{GRS,KM,Sari26}.
 An iterated function 
system, simply $\IFS$, is a finite collection of continuous self-maps of a metric space $(X,d)$. 
$\IFS$s provide a method for both generating and 
characterizing fractal images whenever the maps are contractions. IFS was 
firstly introduced and then popularized by Hutchinson \cite{Hu} and Barnsley \cite{Barn}.

We begin by recalling some definitions and notation, then formulate our
main results. Throughout this paper, $(X,d)$ stands for a compact metric space.
Let \( \mathcal{F} = \{ f_1, \dots, f_k \} \) be an $\IFS$ of homeomorphisms on \( (X, d) \). 
Write \( \mathcal{F}_-= \{ f_1^{-1}, \dots, f_k^{-1} \} \).
For $n\in \mathbb{N}$ and 
\( \omega = (\omega_1, \omega_2, \dots) \in \{ 1, \dots, k \}^{\mathbb{N}} \), define \( f^0_\omega = \text{id}_X \), 
\(f^n_\omega(x) = f_{\omega_n} \circ \dots \circ f_{\omega_1}(x)\) and \(f^{-n}_\omega(x) = f_{\omega_n}^{-1} \circ \dots \circ f_{\omega_1}^{-1}(x).
\)

 The Hutchinson operator 
\( F: \mathcal{K}(X) \to \mathcal{K}(X) \)  and its extension \( \widetilde{F}: 2^X \to 2^X \) of $\mathcal{F}$
are respectively defined by
 \( F(A) = \bigcup_{i=1}^k f_i(A) \) 
and  \( \widetilde{F}(A) = \bigcup_{i=1}^k f_i(A) \).
Recall that  $\mathcal{K}(X)$ is the hyperspace of non-empty compact subsets of $X$.
In a similar manner, the Hutchinson operator 
\( F_-: \mathcal{K}(X) \to \mathcal{K}(X) \) and  its extension \( \widetilde{F}_-: 2^X \to 2^X \) of $\mathcal{F}_-$ are defined by
 \( F_-(A) = \bigcup_{i=1}^k f_i^{-1}(A) \)
and its extension \( \widetilde{F}_-: 2^X \to 2^X \) by \( \widetilde{F}_-(A) = \bigcup_{i=1}^k f_i^{-1}(A) \).

For an $\IFS$ \(\mathcal{F}\)  on a compact metric space \((X, d)\) and \(x_1, x_2 \in X\) equip \(X\) with the metric
\begin{equation}\label{defmetric}
d_F(x_1, x_2) := \sup_{n \in \mathbb{N}} d_H(F^n(x_1), F^n(x_2)).
\end{equation}
A point \(x \in X\) is a sensitive point of \(F\) if the identity map \(\text{id}_X: (X, d) \to (X, d_F)\) is discontinuous at \(x\). 
The Hutchinson operator  \(F\) is sensitive if there exists \(\epsilon > 0\) 
such that every non-empty open subset has \(d_F\)-diameter at least \(\epsilon\). 
A point $x$  in $X$ is equicontinuous if \(\text{id}_X: (X, d) \to (X, d_F)\) is 
continuous at \(x\). We denote the set of all equicontinuous points of \(F\) by \(\mathrm{Eq}(F)\). 
The Hutchinson operator \(F\) is equicontinuous if \(\mathrm{Eq}(F) = X\).

There is an interesting question related to equicontinuity and sensitivity  
of the Hutchinson operators of $\mathcal{F}$ and $\mathcal{F}_-$.
\begin{quote}
$\mathbf{Question.}$    If  $ \mathcal{F} = \{ f_1, \dots, f_k \} $  is an $\IFS$ where    $f_i$ is a homeomorphism for each $i\in \{1,\dots,k\}$ 
and the Hutchinson operator of $\mathcal{F}$ is  equicontinuous, 
can the Hutchinson operator of $\mathcal{F}_-$ be sensitive? 
\end{quote}
The following theorem gives positive answer to the mentioned question.
\begin{mainthm}\label{T1}
There exists an IFS $\mathcal{F}$   of homeomorphisms on the circle that the Hutchinson operator of 
$\mathcal{F}$ is equicontinuous, 
for which the Hutchinson operator of $\mathcal{F}_-$ is sensitive.
\end{mainthm}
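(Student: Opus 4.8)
The plan is to build the IFS by hand on the circle $S^1$ and to verify the two requirements separately. It is instructive first to note why the example cannot be too simple. The only natural way to make $F$ equicontinuous on $S^1$ is to arrange that $F^n(x)\to S^1$ in $\mathcal{K}(S^1)$ uniformly in $x$: a proper uniformly attracting set would have to be a Cantor set or a finite set, and then a small open set inside its widest complementary arc is contracted, not expanded, by every inverse word, which would destroy sensitivity of $F_-$ there. But $F^n(x)\to S^1$ for all $x$ makes $\mathcal{F}$ minimal, hence $\mathcal{F}_-$ minimal, so the forward/backward asymmetry we need is not that backward orbits fail to return but that they return slowly; in particular no generator and no word of $\mathcal{F}$ may be conjugate to a rotation (such a map has identical forward and backward behaviour), nor may $\langle\mathcal{F}\rangle$ be a group. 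Concretely I would take $\mathcal{F}=\{f_1,f_2\}$ assembled from North–South–type homeomorphisms, tuned so that: (i) for every $x$ the orbit set $F^n(x)$ is $\eta(n)$-dense in $S^1$ with a rate $\eta(n)\to 0$ independent of $x$; and (ii) some forward word $w$ has an attracting fixed point $a$ with large basin, while backward the orbit of a point $\delta$-close to $a$ needs time $R(\delta)\to\infty$ (as $\delta\to0$) to return $\delta$-close to $a$.

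Granting (i), equicontinuity of $F$ is then formal. Let $\eta(n)\to0$ be a uniform bound for $d_H(F^n(x),S^1)$. Given $\epsilon>0$, choose $N$ with $2\eta(N)<\epsilon$. The finitely many maps $f^n_\omega$ with $n\le N$ are uniformly continuous, so there is $\delta>0$ with $d_H(F^n(x_1),F^n(x_2))<\epsilon$ whenever $d(x_1,x_2)<\delta$ and $n\le N$; while for $n>N$ one has $d_H(F^n(x_1),F^n(x_2))\le d_H(F^n(x_1),S^1)+d_H(S^1,F^n(x_2))\le 2\eta(N)<\epsilon$. Hence $d_F(x_1,x_2)<\epsilon$ whenever $d(x_1,x_2)<\delta$, i.e.\ $\mathrm{Eq}(F)=S^1$. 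Establishing (i) itself is the first genuine task: one must show that, for each $n$ and each $x$, some subfamily of the length-$n$ words already carries $x$ onto an $\eta(n)$-dense set. I would do this by splitting $[0,n]$ according to the dynamics of the North–South factors — before and after the relevant iterate has come close to a fixed point — and applying quantitative equidistribution on the longer of the two sub-intervals.

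Sensitivity of $F_-$ is the harder half and the main obstacle. Fix the attracting fixed point $a$ of $w$, so that $a$ is a repelling fixed point of $w^{-1}\in\langle\mathcal{F}_-\rangle$, which supplies genuine backward expansion; let $\epsilon>0$ be a fixed fraction of the excursion of the $w^{-1}$-orbit from a neighbourhood of $a$ to the other fixed point of $w$. Given a non-empty open $U$, use minimality of $\mathcal{F}_-$ to move $U$ by a backward word into a controlled position so that the transported copy contains two points $x_1,x_2$ straddling $a$ at a definite relative distance, then iterate $w^{-1}$ until at some time $n$ the images of $x_1$ and $x_2$ are $\ge 2\epsilon$ apart. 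The crux — and where property (ii) is indispensable — is to turn this separation of a single pair of orbit points into $d_H(F_-^n(x_1),F_-^n(x_2))\ge\epsilon$: one must show that the pulled-apart image of $x_2$ is $\epsilon$-isolated in $F_-^n(x_1)$, i.e.\ is not shadowed by $f^{-n}_\tau(x_1)$ for any other word $\tau$. Property (ii) is exactly what guarantees that the remaining length-$n$ backward words have not yet carried $U$ back near the relevant region, so that $F_-^n(x_1)$ has a wide hole around $a$ that $F_-^n(x_2)$ lacks (or vice versa). The remaining work is to make this uniform over all open $U$ — routing an arbitrary $U$ by a backward word into the appropriate position relative to the basin of $a$, using minimality of $\mathcal{F}_-$, and then applying the above — and I expect this verification, together with the simultaneous realisation of (i) and (ii) in a single pair of homeomorphisms, to account for the bulk of the argument.
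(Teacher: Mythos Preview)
Your derivation of equicontinuity of $F$ from the uniform attractor property (i) is correct and is essentially the mechanism the paper uses. The difficulty is entirely on the backward side, and there your proposal rests on a false step: you write that ``$F^n(x)\to S^1$ for all $x$ makes $\mathcal{F}$ minimal, hence $\mathcal{F}_-$ minimal.'' For iterated function systems forward minimality does \emph{not} imply backward minimality, and the paper's construction is built around precisely this failure. One starts from a symmetric IFS on $S^1$ admitting an exceptional minimal Cantor set $K$ (so $\emptyset$, $K$, $S^1$ are the only closed invariant sets) and adjoins a single homeomorphism $h$ with $K\subsetneq h(K)$ and an attracting fixed point in $S^1\setminus K$. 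The enlarged system is forward minimal with $S^1$ as attractor --- giving equicontinuity of $F$ by your own argument --- while $h^{-1}(K)\subset K$ forces $K$ to remain closed and invariant for $\mathcal{F}_-$, so the backward system is \emph{not} minimal. Sensitivity of $F_-$ then follows from the coexistence of this proper invariant set with the fact (dual to $S^1$ being a forward attractor) that every open set is eventually mapped onto all of $S^1$ by $F_-$; the paper obtains this last implication from cited results.

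Your mistaken belief that $\mathcal{F}_-$ must be minimal pushes you toward a much harder programme: extracting sensitivity purely from non-uniform backward spreading rates (your ``property (ii)''). You correctly isolate the crux --- Hausdorff separation of $F_-^n(x_1)$ and $F_-^n(x_2)$ requires ruling out shadowing by \emph{every} length-$n$ backward word, not just the one coming from $w^{-1}$ --- but this step is left entirely open, as is the actual construction of North--South maps realising (i) and (ii) simultaneously. As it stands the proposal is a plan whose two hardest steps are unperformed, and the false minimality claim has steered you away from the structural shortcut (a proper backward-invariant Cantor set) that makes the paper's argument brief.
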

The next  result shows that equicontinuity of the Hutchinson operator $\mathcal{F}$ does not necessarily imply the sensitivity of the Hutchinson operator 
of $\mathcal{F}_-$.
\begin{mainthm}\label{T2}
There exists a non-symmetric  IFS $\mathcal{F}$  of homeomorphisms on the circle that the Hutchinson operators of 
$\mathcal{F}$ and $\mathcal{F}_-$ are equicontinuous.
\end{mainthm}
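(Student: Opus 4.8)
The plan is to take $\mathcal{F}$ to be a finite family of rotations of the circle. Write $S^1 = \R/\mathbb{Z}$ with its usual translation-invariant metric $d$, and for $\alpha\in\R$ let $R_\alpha\colon S^1\to S^1$, $R_\alpha(z)=z+\alpha$, be the rotation by $\alpha$. Set $\mathcal{F}=\{R_{\alpha_1},\dots,R_{\alpha_k}\}$, choosing the angles so that, modulo $\mathbb{Z}$, the set $\{\alpha_1,\dots,\alpha_k\}$ differs from $\{-\alpha_1,\dots,-\alpha_k\}$; for instance $k=2$ with $\alpha_1=\tfrac14$ and $\alpha_2=\tfrac13$, or already $k=1$ with any $\alpha\notin\{0,\tfrac12\}$. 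Each $R_{\alpha_i}$ is a homeomorphism of $S^1$ with $R_{\alpha_i}^{-1}=R_{-\alpha_i}$, so $\mathcal{F}_-=\{R_{-\alpha_1},\dots,R_{-\alpha_k}\}$ is again an $\IFS$ of rotations, and by the choice of the angles $\mathcal{F}_-\neq\mathcal{F}$; hence $\mathcal{F}$ is non-symmetric, as required.

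Next I would exploit that every composition $f^n_\omega=R_{\alpha_{\omega_n}}\circ\cdots\circ R_{\alpha_{\omega_1}}=R_{\alpha_{\omega_1}+\cdots+\alpha_{\omega_n}}$ is again a rotation, hence an isometry of $(S^1,d)$. Writing $S_n=\{\,\alpha_{\omega_1}+\cdots+\alpha_{\omega_n}\ (\mathrm{mod}\ 1):\omega\in\{1,\dots,k\}^n\,\}$, a finite subset of $S^1$ independent of the base point, one gets $F^n(x)=x+S_n$ for every $x\in S^1$, so that $F^n(y)=R_{y-x}\!\left(F^n(x)\right)$. Since $R_{y-x}$ displaces every point of $S^1$ by exactly $d(x,y)$, this gives
\[
d_H\!\left(F^n(x),F^n(y)\right)\ \le\ d(x,y)\qquad\text{for all }n\in\mathbb{N}.
\]
Substituting into \eqref{defmetric} yields $d_F(x,y)\le d(x,y)$, hence $\mathrm{id}_X\colon(S^1,d)\to(S^1,d_F)$ is $1$-Lipschitz and continuous at every point; thus $\mathrm{Eq}(F)=S^1$ and $F$ is equicontinuous.

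Finally, the identical argument applies verbatim to $\mathcal{F}_-$, which is again an $\IFS$ of rotations, so that $d_{F_-}(x,y)\le d(x,y)$ and $F_-$ is equicontinuous as well; this proves Theorem~\ref{T2}. I do not anticipate a real obstacle here: the only point that needs care is the displayed estimate, which must hold uniformly in $n$, and that uniformity is precisely what the isometric structure of the maps (and of their inverses) supplies. The conceptual content is that the ``isometry'' mechanism producing equicontinuity is symmetric under inversion — in contrast with Theorem~\ref{T1}, where the equicontinuity of $F$ must rest on a genuinely non-isometric, essentially contractive behaviour that $\mathcal{F}_-$ then reverses — while the non-symmetry requirement costs nothing. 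One can equally take $\mathcal{F}=\{h^{-1}R_{\alpha_1}h,\dots,h^{-1}R_{\alpha_k}h\}$ for any fixed homeomorphism $h$ of $S^1$: equicontinuity of $F$ and of $F_-$ then follows from the rotation case together with the uniform continuity of $h$ and $h^{-1}$ on the compact circle.
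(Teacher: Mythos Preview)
Your argument is correct: a family of rotations is an IFS of isometries, so $d_H(F^n(x),F^n(y))\le d(x,y)$ for all $n$, whence $d_F\le d$ and $F$ (and, by the same reasoning, $F_-$) is equicontinuous; choosing the angles so that $\{\alpha_i\}\ne\{-\alpha_i\}\pmod 1$ gives $\mathcal{F}\ne\mathcal{F}_-$, which is precisely the paper's meaning of ``non-symmetric''. The conjugation remark at the end is also fine, by uniform continuity of $h$ and $h^{-1}$ on the compact circle.

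The paper, however, takes a genuinely different route. It sets $\mathcal{F}=\{f_1,f_2,f_3,f_4\}$ with $f_1=R_\alpha$ an irrational rotation and $f_2,f_3,f_4$ explicit piecewise-linear circle homeomorphisms whose graphs (respectively, the graphs of their inverses) together cover the diagonal, so that the identity relation is contained in $f_2\cup f_3\cup f_4$ and in $f_2^{-1}\cup f_3^{-1}\cup f_4^{-1}$. Combined with the minimality supplied by $f_1$, this makes $S^1$ an attractor for both $\mathcal{F}$ and $\mathcal{F}_-$, and the equicontinuity of both Hutchinson operators is then deduced from the results of \cite{Sari24,Sari26} rather than from an isometry estimate. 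The payoff is that the paper's example is non-commutative ($f_3\circ f_4\ne f_4\circ f_3$) and not an IFS of isometries, so the equicontinuity does not come ``for free'' from a rigid group structure. Your proof is shorter and entirely self-contained, but the resulting system is abelian (and remains conjugate to rotations even in your final variant), which is exactly the triviality the paper's construction goes out of its way to avoid.
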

\section{ Proof of the main results}
In \cite{GRS} (Example 4.2), we present an $\IFS \mathcal{F}$ which the Hutchinson operator of $\mathcal{F}_-$ is sensitive, but 
we were incapable of proving the equicontinuity of $\mathcal{F}$.
Here, by using results in \cite{Sari24}, in conjunction with some extra assumption, we can prove the equicontinuity 
of $\mathcal{F}$.

Let  \(\mathcal{F}\)  be an $\IFS$ on a compact metric space \((X, d)\). The phase space $X$ is an attractor for $\mathcal{F}$
if for every non-empty compact set $K$, 
$$d_H(F^i(K),X)\to 0 \  \text{as}\ i\to \infty.$$ 
\begin{proof}[Proof of Theorem~\ref{T1} ]
Let $\mathcal{F}$ be  a symmetric set of homeomorphisms of the circle , i.e.
 for each $f \in \mathcal{F}$, it holds that $f^{-1}\in \mathcal{F}$. 
Then, one of the 
following possibilities holds \cite{Gh,Nav}:
\begin{enumerate}
  \item [i)] there is a point $x \in S^1$ with finite orbit i.e. $Card(\mathcal{O}^+_{\mathcal{F}}(x)) < \infty$;
  \item [ii)]  $\mathcal{F}$ is a minimal system, i.e. $S^1=\overline{\mathcal{O}^+_{\mathcal{F}}(x)}$, for all $x\in S^1$; or
  \item [iii)] there is a unique  exceptional minimal set $K$  for $\mathcal{F}$, i.e.
  $K$ is a Cantor set $K$ which the following holds:
$f(K)=K$ for all $f\in \mathcal{F}$ and $K =\overline{\mathcal{O}^+_{\mathcal{F}}(x)}$ for all $x \in K$.
\end{enumerate}
Let  $\mathcal{F}=\{f_1:S^1\to S^1|\ i=1,\dots, k\}$ be a symmetric  $\IFS$ consist of finite homeomorphisms 
which admits an exceptional minimal set $K$ such that the orbit of every point of $S^1\setminus K$ is dense in $S^1$. 
Existence  of such $\IFS$ with an exceptional minimal set has been guaranty by Exer. 2.1.5. in  \cite{Nav}.
So, the closed invariant subsets of $S^1$ for $\mathcal{F}=\mathcal{F}_-$ are $\emptyset,~ K$  and $S^1$.

Now, consider any homeomorphism $h$ of $S^1$ such that $h(K)$ 
strictly contains $K$ and $h$ has an attracting fixed point $p\in S^1\setminus K$. 
Then the $\IFS$ generated by $\mathcal{F}_-\bigcup \{h^{-1}\}$ is not minimal but it is 
backward minimal i.e. $\mathcal{F}\bigcup \{h\}$ is minimal.

Since the $\IFS$ generated by $\mathcal{F}\bigcup \{h\}$
 is minimal and $h:S^1\to S^1$ has an attracting  fixed point $p$, the circle $S^1$ is an 
 attractor for  $\mathcal{F}\bigcup \{h\}$, see Theorem A of \cite{Sari24}.
 Moreover,  Theorem A of \cite{Sari24} also implies that the Hutchinson operator of 
 $\mathcal{F}\bigcup \{h\}$ is equicontinuous.
 
By Theorem D of \cite{Sari26},  for every open set $U$ of $S^1$, there exists $n \in \mathcal{N}$  so that $F^n_-(U) =S^1$.
On the other hand, Theorem 3.11 of \cite{Sari26}, implies that $\mathcal{F}_-\bigcup \{h^{-1}\}$ is sensitive.
\end{proof}
 \begin{proof}[Proof of Theorem~\ref{T2}]
  Let $f_1=R_\alpha:S^1\to S^1$ be the rotation by an irrational angle $\alpha$ i.e. $R_\alpha(x)=x+\alpha,\mod 1.$
  Define functions $f_2, f_3$ and $f_4$   whose graphics are determined by the segments:
\begin{align*}
f_2 &:\   [(0 , 0) , (\frac{1}{4}, \frac{1}{8})], ~[(\frac{1}{4}, \frac{1}{8}) ,  (\frac{1}{2}, \frac{1}{2})],~ \text{and} ~
[(\frac{1}{2}, \frac{1}{2}) , (1 , 1)],\\
  f_3 &:\  [(0 , 0) , (\frac{5}{8}, \frac{5}{8})], ~[(\frac{5}{8}, \frac{5}{8}) , (\frac{6}{8}, \frac{7}{8})], ~\text{and}~
[(\frac{6}{8}, \frac{7}{8}), (1,1)],\\
 f_4 &:\  [(0 , 0) , (\frac{1}{2}, \frac{1}{2})] , ~[(\frac{1}{2}, \frac{1}{2}) , (\frac{5}{8}, \frac{3}{4})], ~
[(\frac{5}{8}, \frac{3}{4}), (\frac{7}{8}, \frac{7}{8})],~ \text{and} ~
[(\frac{7}{8}, \frac{7}{8}) , (1 , 1)].
\end{align*}
Clear, $f_3\circ f_4\neq f_4\circ f_3$. So, the $\IFS$, $\mathcal{F}:=\{f_1,f_2,f_3,f_4\}$ is not commutative. 

It is also worth noting that  identity map $id_{S^1}:S^1\to S^1\subset f_2\bigcup f_3\bigcup f_4$ 
and $id_{S^1}:S^1\to S^1\subset f_2^{-1}\bigcup f^{-1}_3\bigcup f_4^{-1}$, as a relation.
Moreover, $f_1$ and $f_2$ are minimal homeomorphisms.
Therefore, the circle $S^1$ is an attractor for $\mathcal{F}$ and $\mathcal{F}_-$, and by \cite{Sari26} 
the Hutchinson operators $\mathcal{F}$ and $\mathcal{F}_- $ are equicontinuous.
 \end{proof}

\end{document}